\documentclass[12pt,letterpaper]{article}
\usepackage{authblk}
\usepackage{hyperref}
\usepackage{geometry}
\usepackage[latin1]{inputenc}
\usepackage[italian,english]{babel}
\usepackage{amsmath, amsfonts, amsthm}
\usepackage{paralist}
\usepackage{bbm}
\numberwithin{equation}{section}
\geometry{a4paper}

%------------------------------------------------------------------------------------%
\geometry{a4paper, total={125mm,185mm}}

\newtheorem{thm}{Theorem}[section]
\newtheorem{lem}[thm]{Lemma}
\newtheorem{cor}[thm]{Corollary}
\newtheorem{prop}[thm]{Proposition}

\newtheorem{defn}[thm]{Definition}
\theoremstyle{definition}

\theoremstyle{remark}

\newcommand{\ds}{\displaystyle}

\newcommand{\de}{\partial}

{\left\{\begin{array}{@{}l@{}}}{\end{array}\right.}
\patchcmd{\abstract}{\scshape\abstractname}{\textbf{\abstractname}}{}{}
\makeatletter %note a di pagina senza numero 1
\def\@makefnmark{} %note a di pagina senza numero 2
\makeatother %note a di pagina senza numero 3

\date{}

\title{Some properties of the torsion function with Robin boundary conditions}
\author{Rossano Sannipoli}
%\address{Dipartimento di Matematica e Applicazioni ``R. Caccioppoli'', Universit\`a degli studi di Napoli Federico II \\ Via Cintia, Complesso Universitario Monte S. Angelo, 80126 Napoli, Italy.}
%\email{gloria.paoli@unina.it}
%\address{Dipartimento di Ingegneria Elettrica e dell'Informazione \lq\lq M. Scarano\rq\rq, Universit\`a degli Studi di Cassino e del Lazio Meridionale\\ Via G. Di Biasio n. 43, 03043 Cassino (FR), Italy.}
%\email{gianpaolo.piscitelli@unicas.it}
\affil{Dipartimento di Matematica e Applicazioni ``R. Caccioppoli'', Universit\`a degli studi di Napoli Federico II \\ Via Cintia, Complesso Universitario Monte S. Angelo, 80126 Napoli, Italy. \\
\href{rossano.sannipoli@unina.it}{rossano.sannipoli@unina.it}}

\begin{document}
Partial Differential Equations
{\let\newpage\relax\maketitle}
%\markright{\tiny Sharp estimates for the first $p$-Laplacian eigenvalue and for the torsional rigidity on convex sets with  holes}
%\markright{SHARP ESTIMATES FOR THE FIRST $p$-LAPLACIAN EIGENVALUE}
\begin{abstract}
	In this paper we study some properties of the torsion function with Robin boundary conditions. Here we write the shape derivative of the $L^{\infty}$ and $L^p$ norms, for $p\ge 1$, of the torsion function, seen as a functional on a bounded simply connected open set $\Omega \subset \mathbb{R}^n$, and prove that the balls are critical shapes for these functionals, when the volume of $\Omega$ is preserved.
	\\
	\textsc{MSC 2010:} 35J05 - 35J15 - 35J20 - 35J25 \\
	\textsc{Keywords:} Torsion problem - Robin boundary conditions - Shape derivative
\end{abstract}

\section{Introduction}
Let $\Omega\subset \mathbb{R}^n$, $n \ge 2$, be a bounded $C^{2,\alpha}$ and simply connected open set. Let us consider the following torsion problem with Robin boundary conditions
\begin{equation} \label{TPR}
\begin{cases}
-\Delta u=1 & \mbox{in}\ \Omega\vspace{0.2cm}\\
\dfrac{\de u}{\de \nu}+\beta u=0&\mbox{on}\ \partial \Omega\vspace{0.2cm},
\end{cases}
\end{equation}
where $\nu$ stands for the outer unit normal to $\partial\Omega$ and $\beta> 0$ is a positive real number, known as Robin boundary parameter.\\
A weak solution to \eqref{TPR} is a function $u\in H^1(\Omega)$ which satisfies
\begin{equation*}
\int_{\Omega} \nabla u \nabla \phi \,dx + \beta\int_{\partial\Omega} u \phi \,d\mathcal{H}^{n-1} = \int_{\Omega} \phi, \,\,\,\,\,\,\,\,\,\,\,\,\,\, \forall \phi \in H^1(\Omega).
\end{equation*}
When $\beta \to \infty$ we have the well known torsion problem of elasticity or Saint Venant problem, that has been studied by many authors: different estimates and qualitative properties have been studied, for example, by P\'{o}lya and Szeg\"{o} in [PS], or Payne in [P].\\
Other problems with non-linear operators and a different geoemtry of the domain can be found. One example is given in [PPT]: if $\Omega$ is a convex set with a hole, the authors prove that the spherical shell is a minimizer for the torsional rigidity of the $p$-Laplacian, with outer Robin and internal Neumann boundary conditions, when the volume and the outer perimeter are fixed.\\
It is well known that the solution to problem \eqref{TPR} is unique and positive whenever $\partial \Omega$ is sufficiently smooth.\\
A comparison result à la Talenti has been proved in [ANT]. They consider the symmetrized problem, that is

\begin{equation} \label{TPRS}
\begin{cases}
-\Delta v=1 & \mbox{in}\ \Omega^{\sharp}\vspace{0.2cm}\\
\dfrac{\de v}{\de \nu}+\beta v=0&\mbox{on}\ \partial \Omega^{\sharp}\vspace{0.2cm},
\end{cases}
\end{equation}
 where $\Omega^{\sharp}$ is the ball centered in the origin having the same measure as $\Omega$. In dimension $2$, they proved that
 \begin{equation*}
 u^{\sharp} (x)\le v(x)  \;\;\;\;\;\;\; x \in \Omega^{\sharp},
 \end{equation*} 
 where $v$ is the solution to the problem \eqref{TPRS} and $u^{\sharp}$ is the Schwarz symmetrization of $u$ (for more details see [K]). Moreover they proved, for $n\ge 3$, that
 \begin{equation} \label{comp1}
 \|u\|_{L^{p,1}(\Omega)}\le \|v\|_{L^{p,1}(\Omega^{\sharp})}
 \end{equation}  
 and
 \begin{equation*} 
 \|u\|_{L^{2p,2}(\Omega)}\le \|v\|_{L^{2p,2}(\Omega^{\sharp})},
 \end{equation*}
 for all $0<p\le \frac{n}{n-2}$. Here $L^{p,q}(\Omega)$ is the Lorentz space (for more detail see [L]). It follows that in dimensions greater than 3
 \begin{equation*}
 	\| u \|_{L^p(\Omega)} \le \| v \|_{L^p(\Omega^{\sharp})}
 \end{equation*} 
 when $p=1,2$.\\
In [BG] the authors proved, with different arguments, that $\|u \|_{L^1(\Omega)} \le \|v\|_{L^1(\Omega^{\sharp})}$, where $u$ is the solution to \eqref{TPR} and $v$ is the solution to the symmetrized problem \eqref{TPRS}.

Problem \eqref{TPR} is a particular case of the following
\begin{equation} \label{TPRG}
	\begin{cases}
		-\Delta u=f & \mbox{in}\ \Omega\vspace{0.2cm}\\
		\dfrac{\de u}{\de \nu}+\beta u=0&\mbox{on}\ \partial \Omega\vspace{0.2cm},
	\end{cases}
\end{equation}

where $f \in L^2(\Omega)$ and its symmetrized problem is \begin{equation} \label{TPRGS}
\begin{cases}
-\Delta v=f^{\sharp} & \mbox{in}\ \Omega^{\sharp}\vspace{0.2cm}\\
\dfrac{\de v}{\de \nu}+\beta v=0&\mbox{on}\ \partial \Omega^{\sharp}\vspace{0.2cm},
\end{cases}
\end{equation} 
 where $f^{\sharp}$ is the Schwarz symmetrization of $f$. When in \eqref{TPRG} and \eqref{TPRGS} we have Dirichlet boundary conditions, Talenti (see [T]), via the rearrangement of a function and the Schwarz symmetrization, showed the following 
 \begin{equation} \label{compf}
 	u^{\sharp}(x) \le v(x) \;\;\;\;\; x\in \Omega^{\sharp}
 \end{equation}
Still in [ANT], a Talenti comparison result for the problem \eqref{TPRG} has been proved: when $n\ge 2$
\begin{equation*}
	\|u\|_{L^{p,1}(\Omega)} \le \|v\|_{L^{p,1}(\Omega^{\sharp})} \,\,\,\,\,\,\, \text{for all} \,\,\, 0<p \le \frac{n}{2n-2},
\end{equation*} 
and
\begin{equation*}
\|u\|_{L^{2p,2}(\Omega)} \le \|v\|_{L^{2p,2}(\Omega^{\sharp})}
\,\,\,\,\,\,\, \text{for all} \,\,\, 0<p \le \frac{n}{3n-4},
\end{equation*}
where $v$ is solution to \eqref{TPRGS}. So, in dimension 2, we have that
\begin{equation} \label{compf}
\| u \|_{L^p(\Omega)} \le \| v \|_{L^p(\Omega^{\sharp})},
\end{equation} 
when $p=1,2$. One may ask if \eqref{compf} is still true for larger values of $p$ in dimension 2 or if it is valid in every dimension and value of $p$. The authors, though, found counterexamples of the untruthfulness of these questions when $n=2$ and $p= \infty$, and when $n=3$ and $p=2$.
This led to the following open problems: 
\begin{itemize}
\item $u^{\sharp}\le v$ in $\Omega^{\sharp}$ for $n\ge 3$ and $f\equiv 1$ ;\\
\item $\|u\|_{L^1(\Omega)}\le \|v\|_{L^1(\Omega^{\sharp})}$ for $n\ge 3$ and $f\in L^2(\Omega)$.
\end{itemize}
 In this paper we move the first steps in these directions.\\
%Let us consider a perturbation of the set $\Omega$ of the type
%\begin{equation*}
%	\Omega_t = \ds \{ y=x+tV(x)+\frac{t^2}{2}W(x)+ o(t^2): x\in \Omega, t \,\, \text{small enough}\},
%\end{equation*}
%where $V$ and $W$ are two vector fields smooth enough. If we denote by $\mathcal{A}$ a particular subset of the powerset of $\mathbb{R}^n$ and consider a functional $F:\mathcal{A} \longrightarrow \mathbb{R}$, the we will define shape 

In particular we set
\begin{equation*} 
M(\Omega) = \|u\|_{L^\infty (\Omega)},
\end{equation*}
and for every $p\in [1,+\infty)$ we denote the following functional
\begin{equation*} \label{LPF}
F_p(\Omega) = \int_{\Omega} |u(x)|^p \,dx = \int_{\Omega} u^p(x)\,dx = \|u\|_{L^p(\Omega)}^p,
\end{equation*}
where $u$ is solution to \eqref{TPR}. We are interested in computing the shape derivative (see [HP]) of these two functionals and prove that the ball centered at the origin is a critical shape for them.\\ Namely, if $\Omega\subset \mathbb{R}^n$ is a bounded $C^{2,\alpha}$ simply connected open set, let us consider a first order perturbation
\begin{equation*}
	\Omega_t = (\mathbbm{1}_{\mathbb{R}^n}+tV)(\Omega),
\end{equation*}
with $\mathbbm{1}_{\mathbb{R}^n}$ being the identity function, $V$ a $C^{2,\alpha}(\mathbb{R}^{n},\mathbb{R}^{n})$ vector field and $t$ a small real number. We are interested in the study of the limits (if they exist)
\begin{equation}
	M'(\Omega,V)= \lim_{t\to 0 } \frac{M(\Omega_t)-M(\Omega)}{t}
\end{equation}
and
\begin{equation}\label{SDFF}
F_p'(\Omega,V)= \lim_{t\to 0 } \frac{F_p(\Omega_t)-F_p(\Omega)}{t}.
\end{equation}
%where $\Omega_t = (\mathbbm{1}_{\mathbb{R}^n}+tV)(\Omega)$, with $\mathbbm{1}_{\mathbb{R}^n}$ being the identity function, $V$ a $C^{2,\alpha}(\mathbb{R}^{n},\mathbb{R}^{n})$ vector field and $t$ a small real number.\\
Let us denote by $B_R$ the ball centered at the origin in $\mathbb{R}^n$ with radius $R>0$. Then problem \eqref{TPR} becomes 
\begin{equation} \label{SDB}
\begin{cases}
-\Delta u=1 & \mbox{in}\ B_R\vspace{0.2cm}\\
\dfrac{\de u}{\de \nu}+\beta u=0&\mbox{on}\ \partial B_R\vspace{0.2cm},
\end{cases}
\end{equation}
where $\nu = \frac{x}{R}$ is the outer unit normal to the boundary. \\
We are now able to state the main theorem.\\

{\bf Main result.} {\it
	The ball $B_R$ is a critical shape for the functionals $M(\Omega)$ and $F_p(\Omega)$, $p\ge 1$, i.e.}
	\begin{equation*}
	M'(B_R,V)=F'_p(B_R,V)=0,
	\end{equation*}
{\it where $V$ is a $C^{2,\alpha}(\mathbb{R}^n,\mathbb{R}^n)$ vector field volume preserving of the first order and where $M'(\cdot, v)$ and $F_p '(\cdot, v)$ are the shape derivatives of $M$ and $F_p$ respectively.}\\

\vspace{1pt}
For the precise definition of vector field volume preserving of the first order see section 2.

Next section will include some preliminary results in order to compute the shape derivative of the two functionals defined before. In section $3$ there will be the computation of the $L^{\infty}$-norm of the torsion function with the consequent proof of the main result. In section $4$, the same thing for the $L^p$-norm will be done.
\section{Preliminaries}
Throughout this paper we will denote by $B_R= \{x\in \mathbb{R}^n : \|x\| < R\}$ the ball centered at the origin with radius $R>0$, where $\|\cdot \|$ is the classical euclidean distance; by $\Omega$ a bounded $C^{2,\alpha}$ and simply connected open set with finite Lebeasgue measure, where $C^{2,\alpha}$ stands for the $\alpha$-H\"olderian space. We denote by $\mathcal{H}^{n-1}$ the $(n-1)$-dimensional Hausdorff measure in $\mathbb{R}^n$ and by $|\cdot|$ the Lebeasgue measure in $\mathbb{R}^n$. If $x,y \in \mathbb{R}^n$ we will indicate by $x\cdot y$ the scalar product between the two vectors.\\

\subsection{Shape derivative: some definitions and computations}
Here we give some preliminary definitions and results that the reader can find in [BW] and [HP]. We point out that in this and next subsection, we will use the Einstein summation convention for the repeated indexes.\\
%Let $\Omega$ be a bounded domain of $\mathbb{R}^n$ in $C^{2,\alpha}$.
Let $\Omega \subset \mathbb{R}^n$ be a bounded and simply connected open set. We consider a family of perturbations $\{\Omega_t \}_t$ of the form
\begin{equation} \label{PERT}
\Omega_t = \ds \{ y=x+tV(x) : x\in \Omega, t \,\, \text{small enough}\},
\end{equation}
where $V$ is a $C^{2,\alpha} (\mathbb{R}^n, \mathbb{R}^n)$ vector field.

The Jacobian matrix of the transformation 
\begin{equation} \label{ytrans}
	y:= y(t,\Omega) = x + tV(x), \,\,\,\,\,\, x\in \Omega, \,\, t \,\, \text{small}
\end{equation} 
is 
\begin{equation*}
D_y=I+tD_V,
\end{equation*}
where $I$ is the identity matrix and $(D_V)_{ij} = \frac{\partial V_i}{\partial x_j}$. By Jacobi's formula, for small t, the Jacobian determinant is given by
\begin{equation} \label{JD}
J(t) = 1 + t\,\mathrm{div}\, V.
\end{equation}
It is clear that for t small enough, $J(t) \approx 1$, so $y(t,\Omega)$ is a diffeomorphism and in this case we can consider its inverse transformation $x(y)$.\\
In particular we can write the measure of $\Omega_t$ in terms of the perturbations defined before
\begin{equation*}
%\begin{split}
|\Omega_t| = \int_{\Omega} J(t) \, dx = |\Omega| + t \int_{\Omega} \mathrm{div}\,V \,dx.
%&+ \frac{t^2}{2} \int_{\Omega} ((\mathrm{div}\, V)^2 - D_V : D_V + \mathrm{div}\, W)\,dx + o(t^2).
%\end{split}
\end{equation*}

\begin{defn}
$y(t,\Omega)$ is said to be \textit{volume preserving of the first order} if
\begin{equation*}
\int_{\Omega} \mathrm{div}\,V \,dx=0.
\end{equation*}
%and \textit{volume preserving of the second order} if in addition we have
%\begin{equation*}
%\int_{\Omega} ((\mathrm{div}\, V)^2 - D_V : D_V + \mathrm{div}\, W)\,dx = 0.
%\end{equation*}
\end{defn}
 %The shape derivative of the solution to \eqref{TPR} is written by energy arguments. More precisely, let $\{\Omega_t\}_t$ be as before, let $G:\mathbb{R}\longrightarrow \mathbb{R}$ a smooth function and $g$ its derivative ($G'=g$). The following functional is called \textit{Robin energy}

Let $w \in H^1(\Omega_t)$ and let us consider the following energy functional
\begin{equation}\label{RE}
\mathcal{E}(\Omega_t, w)= \int_{\Omega_t} |\nabla_y w(y,t)|^2\,dy -2 \int_{\Omega_t} w(y,t)\,dy + \beta\int_{\partial \Omega_t} w^2(y,t) \, d\sigma_t,
\end{equation}
where with $\nabla_y$ we denoted the gradient operator with respect to $y$ and $d\sigma_t$ is the surface element of $\Omega_t$.\\
 A critical point $u\in H^1(\Omega_t)$ of \eqref{RE} satisfies the Euler-Lagrange equations
\begin{equation}\label{ELR}
\begin{cases}
\Delta_y u(y,t) +1 = 0 & \mbox{in}\ \Omega_t\vspace{0.2cm}\\
\dfrac{\de u}{\de \nu_t}(y,t)+\beta u(y,t)=0&\mbox{on}\ \partial \Omega_t\vspace{0.2cm},
\end{cases}
\end{equation}
where $\Delta_y$ is the Laplacian operator with respect to $y$ and $\nu_t$ is the outer normal to $\partial \Omega_t$.\\
We want to transform the integrals in \eqref{RE} in integrals onto $\Omega$ and $\partial \Omega$. Indeed by a change of variables, using the inverse function $x(y)$ 
%of the transformation \eqref{ytrans}
(which exists for small t), we get
%\begin{equation*}
%	\tilde{u}(t):= \tilde{u}(x+tV(x),t),
%\end{equation*} we have that
\begin{equation} \label{EFT1}
\begin{split} 
\mathcal{E}(\Omega,u) &= \int_{\Omega} \frac{\partial u}{\partial x_i} (x+tV(x),t) \frac{\partial u}{\partial x_j} (x+tV(x),t)\frac{\partial x_i}{\partial y_k}\frac{\partial x_j}{\partial y_k} J(t)\,dx\\
& -2 \int_{\Omega} u(x+tV(x),t)J(t)\,dx + \beta \int_{\partial \Omega} u^2(x+tV(x),t)m(t) \,d\sigma.
\end{split}.
\end{equation}
Here $m(t)$ is the index of deformation when passing from $d\sigma$ to $d\sigma_t$ (with $d\sigma$ being the surface element of $\Omega$). If we define the tangential divergence of the vector field V as follows
\begin{equation} \label{tangdiv}
\mathrm{div}_{\partial\Omega}\,V := \mathrm{div}\,V - \nu\cdot D_V \nu,
\end{equation}
then, up to first order terms, $m(t)$ can be approximated by (See [BW], section $2.2.2$) 
\begin{equation} \label{indexdef}
m(t) = 1 + t\mathrm{div}_{\partial \Omega}\,V.
\end{equation}
%where $\mathrm{div}_{\partial \Omega}\,V$ is the tangential divergence of the vector field $V$, defined as follows

If we denote by
\begin{equation} \label{deftildeu}
\tilde{u}(t):= u(x+tV(x),t)
\end{equation} 
and
\begin{equation} \label{defA}
A=(A_{ij}(t)):= \frac{\partial x_i}{\partial y_k} \frac{\partial x_j}{\partial y_k} J(t),
\end{equation}
we can write \eqref{EFT1} in a more concise form
\begin{equation} \label{EFT}
\mathcal{E}(t) := \int_{\Omega} \nabla \tilde{u}(t) A \nabla \tilde{u}(t)\,dx -2 \int_{\Omega} \tilde{u}(t)J(t)\,dx + \beta \int_{\partial \Omega} \tilde{u}^2(t)m(t) \,d\sigma.
\end{equation}
%where %$\tilde{u}(t):= \tilde{u}(x+tV(x)+\frac{t^2}{2}W(x),t)$,
%\begin{equation*}
% 	A=(A_{ij}(t)):= \frac{\partial x_i}{\partial y_k} \frac{\partial x_j}{\partial y_k} J(t).
% \end{equation*}
%In the last integral of \eqref{EFT}, $m(t)$ is the index of deformation when passing from $d\sigma$ to $d\sigma_t$ (with $d\sigma$ being the surface element of $\Omega$), and up to first order terms, it can be computed as (See [BW], section $2.2.2$)
%\begin{equation} \label{indexdef}
%	m(t) = 1 + t\mathrm{div}_{\partial \Omega}\,V+ o(t),
%\end{equation}
%where $\mathrm{div}_{\partial \Omega}\,V$ is the tangential divergence of the vector field $V$, defined as follows
%\begin{equation*}
%	\mathrm{div}_{\partial\Omega}\,V := \mathrm{div}\,V - \nu\cdot D_V \nu.
%\end{equation*}. \\
If we simplify one more time the notations and indicate by
\begin{equation} \label{defLA}
L_A = \frac{\partial }{\partial x_j}(A_{ij}(t)\frac{\partial }{\partial x_i})
\end{equation}
and
\begin{equation} \label{defNUA}
 \de_{\nu_A} = \nu_i A_{ij}(t)\frac{\partial }{\partial x_j},
\end{equation}
then the transformed function $\tilde{u}(t)$ solves the Eulero-Lagrange equations
\begin{equation} \label{ELRT}
\begin{cases}
L_A \tilde{u}(t) +J(t) = 0 & \mbox{in}\ \Omega\vspace{0.2cm}\\
\de_{\nu_A}\tilde{u}(t)+\beta m(t) \tilde{u}(t) =0&\mbox{on}\ \partial \Omega\vspace{0.2cm}.
\end{cases}
\end{equation}
%For what follows, it will be helpful to write, in a neighbourhood of $t=0$, the coefficients of the taylor expansions of some of the functions seen until now. 
A key role will be played by \eqref{deftildeu}. If we expand it in a neighbourhood of $t=0$ we have
\begin{equation*}
\tilde{u}(t) = \tilde{u}(0)+ t \dot{\tilde{u}}(0) + o(t).
\end{equation*}
We remark that the dot notation stands for the derivative with respect to t. In particular the first two coefficients will be 
\begin{equation} \label{expu1}
\tilde{u}(0):= u(x)
\end{equation}
and
\begin{equation} \label{expu2}
\begin{split}
\dot{\tilde{u}}(0) = \left[ \frac{d}{dt}\tilde{u}(t) \right]_{t=0} &= \left[ \frac{\partial \tilde{u}}{\partial t}(t)+ V(x)\cdot \nabla \tilde{u}(t)\right]_{t=0}\\
&= \frac{\partial \tilde{u}}{\partial t}(0) + V\cdot \nabla u.
\end{split}
\end{equation}
\begin{defn}
	We will call shape derivative of $\tilde{u}$, and it will be denoted by $u'$, the following function
	\begin{equation}
	u'(x) := \frac{\partial \tilde{u}}{\partial t}(0).%\bigg|_{t=0} = \frac{\partial}{\partial t} \tilde{u}(x+tV(x),t)\bigg|_{t=0}.
	\end{equation}
\end{defn}
%with $L_A = \partial_j (A_{ij}(t)\partial_i)$ and $\frac{\partial}{\partial \nu_A} = \nu_i A_{ij}(t)\partial_j$.\\
Hence we can write \eqref{expu2} in this way
\begin{equation} \label{expu3}
	\dot{\tilde{u}}(0)= u' + V\cdot \nabla u.
\end{equation}
Besides the expansion of $\tilde{u}(t)$, it will be helpful to write the taylor series of some other of the function seen until now. Next Lemma will collect all the necessary coefficients of the expansions just mentioned
\begin{lem} \label{EXPANSIONS}
We have that
\begin{equation}
\begin{aligned}
&1)& &J(0)= 1, & &2)& &\dot{J}(0)=\mathrm{div}\,V, \\
&3)& &m(0)= 1, & &4)& &\dot{m}(0)=\mathrm{div}_{\partial \Omega}\,V, \\
&5)& &\tilde{u}(0)= u(x), & &6)& &\dot{\tilde{u}}(0)= u'+V\cdot \nabla u, \\
&7)& &A_{ij}(0)= \delta_{ij}, & &8)& &\dot{A}_{ij}(0)= \mathrm{div}\, V \delta_{ij} - \frac{\partial V_i}{\partial x_j} -\frac{\partial V_j}{\partial x_i}.
\end{aligned}
\end{equation}

\end{lem}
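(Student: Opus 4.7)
The plan is to verify each of the eight expansions individually; most are direct consequences of definitions already introduced earlier in the section, so the main task is to be careful with the order-one expansion of the inverse Jacobian in items (7) and (8).

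First I would dispatch the easy ones. Items (1) and (2) are read off directly from Jacobi's formula \eqref{JD}, which asserts $J(t)=1+t\,\mathrm{div}\,V$ up to higher order terms, and items (3) and (4) from the analogous expansion \eqref{indexdef} for the surface deformation index. Item (5) is the definition \eqref{deftildeu} evaluated at $t=0$, and item (6) has already been computed in \eqref{expu2}--\eqref{expu3}: applying the chain rule to $\tilde u(t)=u(x+tV(x),t)$ and using the definition of the shape derivative $u'=\frac{\partial \tilde u}{\partial t}(0)$ yields $\dot{\tilde u}(0)=u'+V\cdot\nabla u$.

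The real content is in (7) and (8), which require expanding the inverse transformation. From \eqref{ytrans} we have $D_y=I+tD_V$, so for $t$ small enough we may invert via the Neumann series
\begin{equation*}
\frac{\partial x_i}{\partial y_k}=\delta_{ik}-t\,\frac{\partial V_i}{\partial x_k}+o(t).
\end{equation*}
At $t=0$ this immediately gives $A_{ij}(0)=\delta_{ik}\delta_{jk}\cdot 1=\delta_{ij}$, proving (7). For (8), I would substitute the expansion above together with $J(t)=1+t\,\mathrm{div}\,V+o(t)$ into the definition \eqref{defA} and expand to first order:
\begin{equation*}
A_{ij}(t)=\Bigl(\delta_{ik}-t\tfrac{\partial V_i}{\partial x_k}\Bigr)\Bigl(\delta_{jk}-t\tfrac{\partial V_j}{\partial x_k}\Bigr)\bigl(1+t\,\mathrm{div}\,V\bigr)+o(t).
\end{equation*}
Collecting the $t$-coefficient and summing over $k$ yields exactly $\dot A_{ij}(0)=\mathrm{div}\,V\,\delta_{ij}-\tfrac{\partial V_i}{\partial x_j}-\tfrac{\partial V_j}{\partial x_i}$.

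The only step that requires any care is justifying the Neumann-series expansion, which is legitimate because $V\in C^{2,\alpha}$ and $\|tD_V\|$ is strictly less than one for $t$ small enough; this is exactly the same smallness hypothesis under which $y(t,\Omega)$ is a diffeomorphism, already invoked in the paragraph following \eqref{JD}. No other obstacle is expected since each identity reduces to a direct Taylor expansion.
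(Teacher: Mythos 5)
Your proposal is correct and follows essentially the same route as the paper: items (1)--(6) are read off from \eqref{JD}, \eqref{indexdef}, \eqref{expu1} and \eqref{expu3}, and (7)--(8) come from the first-order Neumann-series expansion $(I+tD_V)^{-1}=I-tD_V+o(t)$ substituted into \eqref{defA}, with collecting the $t$-coefficient being the same computation as the paper's product-rule differentiation of the three factors. The only cosmetic difference is the index placement in the expansion of $\partial x_i/\partial y_k$ (yours matches the paper's convention $(D_V)_{ij}=\partial V_i/\partial x_j$, the paper writes the transpose), which is immaterial since the resulting expression for $\dot A_{ij}(0)$ is symmetric in $i,j$.
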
 
\begin{proof}
To compute $1), 2), 3), 4)$ it is sufficient to differentiate \eqref{JD} and \eqref{indexdef} and evaluate for $t=0$. $5)$ and $6)$ are given by \eqref{expu1} and \eqref{expu3} respectively.\\
Some more effort will be needed for the matrix $A$, defined in \eqref{defA}. %whose elements are
%\begin{equation*}
%	A_{ij}(t)= \frac{\partial x_i}{\partial y_k} \frac{\partial x_j}{\partial y_k} J(t).
%\end{equation*}
Remembering that the Jacobian matrix of the transformation $y(t,\Omega)$ is
\begin{equation*}
	D_y = I + tD_V,
\end{equation*}
if $t$ is small enough, we have
\begin{equation*}
\begin{split}
	\frac{\partial x_i}{\partial y_k} = (D_y^{-1})_{ik} &= (I+tD_V)^{-1}_{ik} \\
	&= (I-tD_V  + o(t) )_{ik} 
	= \delta_{ik} -t\frac{\partial V_k}{\partial x_i}+  o(t).
\end{split}
\end{equation*}
%+ t^2 \frac{\partial V_k}{\partial x_s}\frac{\partial V_s}{\partial x_i} +
This allows us to obtain
\begin{equation*}
	A_{ij}(0)= \delta_{ik}\delta_{jk}= \delta_{ij}
\end{equation*}
and
\begin{equation*}
\begin{split}
	\dot{A}_{ij}(0)&= \bigg[\frac{\partial x_i}{\partial y_k} \frac{\partial x_j}{\partial y_k} \dot{J}(t)+ \bigg(\frac{d}{dt}\frac{\partial x_i}{\partial y_k}\bigg)\frac{\partial x_j}{\partial y_k}J(t)+\frac{\partial x_i}{\partial y_k}\bigg(\frac{d}{dt}\frac{\partial x_j}{\partial y_k}\bigg)J(t)\bigg]_{t=0} \\
	&= \mathrm{div}V \delta_{ij} - \frac{\partial V_i}{\partial x_j} -\frac{\partial V_j}{\partial x_i}.
	\end{split}
\end{equation*}
\end{proof}
We want to find the equations that are solved by $u'$ in $B_R$ and on its boundary.

\subsection{An equation for $u'$ in $B_R$}
Let us consider problem \eqref{SDB}. It is well known that it admits a unique and positive solution, given by 
\begin{equation} \label{RTPR}
u(x) = \frac{R}{\beta n} +\frac{1}{2n}(R^2-|x|^2),
\end{equation}
which is a radial and strictly concave function, whose maximum and minimum are achieved in $0$ and on $\partial B_R$ respectevely. More precisely
\begin{equation} \label{umaxumin}
	u_{\max}= u(0)= \frac{R}{\beta n} +\frac{R^2}{2n}, \,\,\,\,\,\,\, u_{\min} = u(R) = \frac{R}{\beta n}.
\end{equation}
In order to prove next proposition, it will be useful to keep in mind the gradient and the Hessian matrix of \eqref{RTPR}. The gradient is
\begin{equation} \label{gradu}
	\nabla u(x) = -\frac{x}{n}.
\end{equation}
In particular, if $x \in \partial B_R$, being $\nu = \frac{x}{R}$ the outer unit normal to the boundary of $B_R$, then
\begin{equation} \label{boundarygrad}
	\nabla u(x) = -\frac{R}{n}\nu, \,\,\,\,\,\,\,\,\,\, \frac{\partial u}{\partial\nu}  = -\frac{R}{n}.
\end{equation}
The Hessian matrix is clearly negative definite and it is given by
\begin{equation}\label{hessu}
 \text{Hess}_u(x)= -\frac{I}{n},
\end{equation}
where I is the identity matrix.

\begin{prop}
Let $V$ be a $C^{2,\alpha}(\mathbb{R}^n,\mathbb{R}^n)$ vector field and $R>0$ a positive real number. The function $u'$ solves the following boundary value problem in the ball with radius $R$
\begin{equation} \label{ELSD}
\begin{cases}
\Delta u' = 0 & \mbox{in}\ B_R\vspace{0.2cm}\\
\ds \frac{\partial u'}{\partial\nu} +\beta u' = \ds \left( \frac{1+\beta R}{n}\right) (V\cdot \nu) &\mbox{on}\ \partial B_R\vspace{0.2cm},
\end{cases}
\end{equation}
where $\nu$ is the outer unit normal to $\partial B_R$. %$\nabla^{\tau}$ is the tangential gradient (the projection of the gradient onto the tangent spaces of $\partial \Omega$) and $u$ is the solution to problem \eqref{TPR}.
\end{prop}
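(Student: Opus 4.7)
The plan is to derive the PDE for $u'$ by differentiating the transformed Euler-Lagrange system \eqref{ELRT} with respect to $t$ at $t=0$. The key advantage of working with \eqref{ELRT} is that it is posed on the fixed domain $\Omega$, so $\partial_t$ commutes with the spatial derivatives and no ambiguity about ``which domain'' arises. For the interior equation the quickest route actually bypasses \eqref{ELRT}: at any interior $x_0 \in B_R$ the point lies in $\Omega_t$ for $t$ small, the solution $u_t$ of \eqref{ELR} satisfies $\Delta_y u_t(x_0) = -1$ independently of $t$, and differentiating at $t=0$ gives $\Delta u'(x_0) = 0$.

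For the boundary condition I differentiate the second equation of \eqref{ELRT}, namely $\nu_i A_{ij}(t)\partial_j \tilde u(t) + \beta m(t)\tilde u(t) = 0$, at $t=0$. Using Lemma \ref{EXPANSIONS} this yields, on $\partial\Omega$,
\begin{equation*}
\frac{\partial \dot{\tilde u}(0)}{\partial \nu} + \nu_i\,\dot A_{ij}(0)\,\frac{\partial u}{\partial x_j} + \beta\,\dot m(0)\,u + \beta\,\dot{\tilde u}(0) = 0.
\end{equation*}
Substituting $\dot{\tilde u}(0) = u' + V\cdot\nabla u$ and collecting the $u'$-terms on the left gives
\begin{equation*}
\frac{\partial u'}{\partial \nu} + \beta u' = -\frac{\partial (V\cdot\nabla u)}{\partial \nu} - \beta\,V\cdot\nabla u - \nu_i\,\dot A_{ij}(0)\,\frac{\partial u}{\partial x_j} - \beta\,\dot m(0)\,u.
\end{equation*}

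I then specialize to $\Omega = B_R$ via \eqref{RTPR}--\eqref{hessu}: on $\partial B_R$ one has $\nabla u = -(R/n)\nu$, $u = R/(\beta n)$, and $\mathrm{Hess}_u = -I/n$. A direct computation gives $\partial_\nu(V\cdot\nabla u) = -(R/n)\,\nu\cdot D_V\nu - (V\cdot\nu)/n$; the contraction $\nu_i\,\dot A_{ij}(0)\,\partial_j u$, using the explicit $\dot A_{ij}(0) = \mathrm{div}\,V\,\delta_{ij} - \partial_j V_i - \partial_i V_j$, evaluates to $-(R/n)\,\mathrm{div}\,V + (2R/n)\,\nu\cdot D_V\nu$; and $\beta\,\dot m(0)\,u = (R/n)\,\mathrm{div}_{\partial\Omega} V$. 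Plugging these into the displayed boundary identity and invoking the identity $\mathrm{div}\,V - \mathrm{div}_{\partial\Omega} V = \nu\cdot D_V\nu$ from \eqref{tangdiv}, every occurrence of $\nu\cdot D_V\nu$, $\mathrm{div}\,V$, and $\mathrm{div}_{\partial\Omega} V$ cancels, leaving exactly $(V\cdot\nu)/n + (\beta R/n)(V\cdot\nu) = \frac{1+\beta R}{n}(V\cdot\nu)$, as claimed.

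The main obstacle I anticipate is purely symbolic bookkeeping: three distinct sources ($\partial_\nu(V\cdot\nabla u)$, $\nu_i\dot A_{ij}(0)\partial_j u$, and the $\dot m(0)u$ term) each contribute a tangential quantity among $\nu\cdot D_V\nu$, $\mathrm{div}\,V$, and $\mathrm{div}_{\partial\Omega}V$ with different coefficients, and the miraculous cancellation of all of them into the simple right-hand side only materializes after invoking both the radial identities for $u$ on $\partial B_R$ and the tangential-divergence identity \eqref{tangdiv}. There is no deeper PDE difficulty; the care required is in keeping track of signs, symmetric summations in $\dot A_{ij}(0)$, and the factor $\beta u = R/n$ on $\partial B_R$.
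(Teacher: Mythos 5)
Your proof is correct, and for the heart of the proposition --- the Robin condition on $\partial B_R$ --- it follows exactly the paper's route: differentiate the transported boundary condition in \eqref{ELRT} at $t=0$, use the expansions of Lemma \ref{EXPANSIONS}, evaluate each term with $\nabla u=-\tfrac{R}{n}\nu$ and $\beta u=\tfrac{R}{n}$ on $\partial B_R$, and cancel the tangential quantities via \eqref{tangdiv}; your term-by-term values ($-\tfrac{R}{n}\mathrm{div}\,V+\tfrac{2R}{n}\,\nu\cdot D_V\nu$, $\tfrac{R}{n}\mathrm{div}_{\partial\Omega}V$, $\partial_\nu(V\cdot\nabla u)=-\tfrac{R}{n}\nu\cdot D_V\nu-\tfrac{1}{n}V\cdot\nu$) agree with the paper's and the final cancellation checks out. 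The only genuine difference is the interior equation: you obtain $\Delta u'=0$ by fixing an interior point $x_0$, noting $x_0\in\Omega_t$ for small $t$ and differentiating $\Delta_y u(x_0,t)=-1$ in $t$, whereas the paper differentiates the transported equation $L_A\tilde u(t)+J(t)=0$ on the fixed ball and shows all extra terms cancel using $\mathrm{Hess}_u=-\tfrac{I}{n}$, $\Delta u=-1$ and Schwarz's theorem. Your shortcut is cleaner and makes transparent that $\Delta u'=0$ holds for a general domain (the right-hand side $1$ does not depend on the domain), but it implicitly assumes that the Eulerian derivative $u'(x_0)=\partial_t u(x_0,t)\big|_{t=0}$ exists and commutes with $\Delta$ near $x_0$; this is standard (it follows from the differentiability of the material derivative $\dot{\tilde u}(0)$ supplied by the fixed-domain formulation together with interior elliptic regularity), and is at the same formal level as the paper's own computation, but it deserves a sentence, since the whole point of passing to \eqref{ELRT} is that differentiability in $t$ is only directly justified on the fixed domain.
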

\begin{proof}
If we differentiate  the first equation in \eqref{ELRT} with respect to $t$ and evaluate for $t=0$, we obtain
\begin{equation*}
	L_{A(0)}\dot{\tilde{u}}(0)+ L_{\dot{A}(0)}\tilde{u}(0)+\dot{J}(0)=0.
\end{equation*}
It will be helpful to write explicitly $L_A$ and $L_{\dot{A}}$. By applying Lemma \ref{EXPANSIONS} we have
\begin{equation*}
\begin{split}
	\frac{\partial}{\partial x_j}\bigg(\delta_{ij} \frac{\partial}{\partial x_i}\bigg)&\bigg(u'+V_k\frac{\partial u}{\partial x_k}\bigg)\\
	& + \frac{\partial}{\partial x_j}\bigg(\frac{\partial V_k}{\partial x_k} \delta_{ij}\frac{\partial}{\partial x_i} -\frac{\partial V_j}{\partial x_i} \frac{\partial}{\partial x_i}-\frac{\partial V_i}{\partial x_j}\frac{\partial}{\partial x_i}\bigg)u + \frac{\partial V_k}{\partial x_k} = 0.
	\end{split}
\end{equation*}
Hence
\begin{equation*} 
\begin{split}
	\frac{\partial}{\partial x_j}\bigg( \frac{\partial u'}{\partial x_j}+\frac{\partial}{\partial x_j} & \bigg(V_k\frac{\partial u}{\partial x_k}\bigg)\bigg)\\
	 &+ \frac{\partial}{\partial x_j}\bigg(\frac{\partial V_k}{\partial x_k} \frac{\partial u}{\partial x_j} -\frac{\partial V_j}{\partial x_i} \frac{\partial u}{\partial x_i}-\frac{\partial V_i}{\partial x_j}\frac{\partial u}{\partial x_i}\bigg) + \frac{\partial V_k}{\partial x_k} = 0.
\end{split}
\end{equation*}
Renaming the indexes $k$ 
\begin{equation*}
\begin{split}
	\frac{\partial^2 u'}{\partial x_j^2} &+\frac{\partial^2 V_i}{\partial x_j^2} \frac{\partial u}{\partial x_i}+2\frac{\partial V_i}{\partial x_j}\frac{\partial^2 u}{\partial x_j\partial x_i} + V_i \frac{\partial^3 u}{\partial^2x_j \partial x_i}\\
	&+\frac{\partial^2 V_i}{\partial x_j \partial x_i}\frac{\partial u}{\partial x_j} +\frac{\partial V_i}{\partial x_i} \frac{\partial^2 u}{\partial x_j^2}-\frac{\partial^2 V_j}{\partial x_j\partial x_i}\frac{\partial u}{\partial x_i}  \\
	&-\frac{\partial V_j}{\partial x_i}\frac{\partial^2 u}{\partial x_j \partial x_i}-\frac{\partial^2 V_i}{\partial x_j^2}\frac{\partial u}{\partial x_i} 
	-\frac{\partial V_i}{\partial x_j}\frac{\partial^2 u}{\partial x_j \partial x_i} + \frac{\partial V_i}{\partial x_i} = 0.
\end{split} 
\end{equation*}
Considering that $(\mathrm{Hess}_u(x))_{ij} = \frac{\partial^2 u}{\partial x_j \partial x_i}=0$ whenever $i\neq j$ and the fact that $\frac{\partial^2 u}{\partial x_j^2}=\Delta u= -1$, we have
\begin{equation*}
\frac{\partial u'}{\partial x_j^2} + \frac{\partial^2 V_i}{\partial x_j\partial x_i}\frac{\partial u}{\partial x_j}-\frac{\partial^2 V_j}{\partial x_j\partial x_i} \frac{\partial u}{\partial x_i}  = 0. 
\end{equation*}
Eventually, by Schwarz Theorem, we obtain $\Delta u' = 0$ in $B_R$. \\
If we now differentiate the boundary conditions in \eqref{ELRT} and evaluate them for $t=0$, then
\begin{equation} \label{BOUNDARY}
	\partial_{\nu_{A(0)}}\dot{\tilde{u}}(0) + \partial_{\nu_{\dot{A}(0)}}\tilde{u}(0) + \beta\dot{m}(0)\tilde{u}(0)+ \beta m(0)\dot{\tilde{u}}(0)=0.
\end{equation}
Let us compute every term in the previous equation. Considering the boundary conditions satisfied by $u$, Lemma \eqref{EXPANSIONS} and \eqref{boundarygrad} we get
\begin{equation*}
\partial_{\nu_{A(0)}}\dot{\tilde{u}}(0) =\frac{\partial u'}{\partial \nu}  + \frac{\partial}{\partial \nu}(V\cdot \nabla u) = \frac{\partial u'}{\partial \nu} -\frac{R}{n}\nabla(V\cdot \nu)\cdot \nu,
\end{equation*}
\begin{equation*}
\begin{split}
	\partial_{\nu_{\dot{A}(0)}}\tilde{u}(0) &= \frac{\partial u}{\partial \nu} \, \mathrm{div}\,V - \nu \cdot D_V \nabla u - \nabla u \cdot D_V\nu\\
	& =  -\frac{R}{n} \mathrm{div}\,V +\frac{2R}{n}\nu \cdot D_V \nu,
\end{split}   
\end{equation*}
\begin{equation*}
	 \beta\dot{m}(0)\tilde{u}(0) = \beta u \, \mathrm{div}_{\partial B_R} V = \frac{R}{n}\mathrm{div}V-\frac{R}{n}\nu\cdot D_V \nu,
\end{equation*}
\begin{equation*}
	\beta m(0)\dot{\tilde{u}}(0) = \beta u' + \beta V\cdot \nabla u =\beta u' -\frac{R\beta}{n} V\cdot \nu.
\end{equation*}
Substituting in \eqref{BOUNDARY} we have 
\begin{equation*}
\frac{\partial u'}{\partial \nu} + \beta u' =
 \frac{R}{n}\nabla(V\cdot \nu)\cdot \nu -\frac{R}{n}\nu \cdot D_V\nu + \frac{\beta R}{n} V\cdot \nu.
\end{equation*}
Now
\begin{equation*}
\frac{R}{n}\nabla (V\cdot \nu) =\frac{R}{n} \nu \cdot D_V \nu + \frac{1}{n} V\cdot \nu.
\end{equation*}
Hence
\begin{equation*}
	\frac{\partial u'}{\partial \nu} + \beta u' = \frac{1}{n} V\cdot \nu + \frac{\beta R}{n} V\cdot \nu =  \left( \frac{1+\beta R}{n}\right) (V\cdot \nu).
\end{equation*}
\end{proof}

As a consequence of the previous proposition, we deduce that
\begin{cor} \label{ZMF}
	If  $\,V\in C^{2,\alpha}(\mathbb{R}^n,\mathbb{R}^n)$ is a vector field volume preserving of the first order, then the solution $u'$ to \eqref{ELSD} is a function with zero mean value in $B_R$, that is
	\begin{equation*}
	\int_{B_R} u'\,dx = \int_{\partial B_R} u' \,d\sigma=0.
	\end{equation*}
	\begin{proof}
		Let us integrate the first equation in \eqref{SDB}
		\begin{equation} \label{NDUP}
		0=\int_{B_R} \Delta u' \,dx = \int_{B_R} \mathrm{div}(\nabla u') \,dx = \int_{\partial B_R} \frac{\partial u'}{\partial \nu} \,d\sigma.
		\end{equation}
		Since $V$ is volume preserving of the first order
		\begin{equation} \label{VPFO}
		\int_{B_R} \mathrm{div}\, V \,dx= \int_{\partial B_R} V\cdot \nu \,d\sigma =0.
		\end{equation}
		So integrating the second equation on the boundary, by \eqref{NDUP} and \eqref{VPFO}
		\begin{equation*}
		\int_{\partial B_R} u' \,d\sigma = \frac{1}{\beta}\left( \frac{1+\beta R}{n} \right) \int_{\partial B_R} V\cdot \nu \,d\sigma =0.
		\end{equation*}
		In conclusion, being $u'$ a harmonic function, by the mean value theorem
		\begin{equation*}
		\int_{B_R} u' \,dx = \frac{R}{n} \int_{\partial B_R} u' \,d\sigma = 0.
		\end{equation*}
	\end{proof}
\end{cor}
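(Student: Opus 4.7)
The plan is to use the two equations in \eqref{ELSD} to obtain integral identities, then combine them with the volume-preserving hypothesis to extract vanishing of the boundary mean, and finally invoke harmonicity to get the volume mean.

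First, I would integrate the interior equation $\Delta u' = 0$ over $B_R$. Applying the divergence theorem immediately yields
\begin{equation*}
\int_{\partial B_R} \frac{\partial u'}{\partial \nu}\,d\sigma = 0.
\end{equation*}
Next, I would integrate the Robin boundary condition over $\partial B_R$:
\begin{equation*}
\int_{\partial B_R} \frac{\partial u'}{\partial \nu}\,d\sigma + \beta \int_{\partial B_R} u'\,d\sigma = \frac{1+\beta R}{n}\int_{\partial B_R} V\cdot \nu\,d\sigma.
\end{equation*}
The hypothesis that $V$ is volume preserving of the first order gives $\int_{B_R}\mathrm{div}\,V\,dx = 0$, which by the divergence theorem means $\int_{\partial B_R} V\cdot \nu\,d\sigma = 0$. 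Combined with the previous identity, this forces $\int_{\partial B_R} u'\,d\sigma = 0$.

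For the volume integral, I would use that $u'$ is harmonic in $B_R$, so by the mean value property both $\frac{1}{|B_R|}\int_{B_R} u'\,dx$ and $\frac{1}{|\partial B_R|}\int_{\partial B_R} u'\,d\sigma$ equal $u'(0)$. Since $|B_R|/|\partial B_R| = R/n$, we get
\begin{equation*}
\int_{B_R} u'\,dx = \frac{R}{n}\int_{\partial B_R} u'\,d\sigma = 0.
\end{equation*}

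There is no real obstacle here: the proof is a short combination of the divergence theorem with the mean value property of harmonic functions, and the only nontrivial ingredient is the volume-preserving condition, which enters exactly once to cancel the right-hand side of the integrated Robin condition.
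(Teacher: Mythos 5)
Your proposal is correct and follows essentially the same route as the paper: integrate $\Delta u'=0$ and apply the divergence theorem, integrate the Robin condition and use the volume-preserving hypothesis to kill the right-hand side, then use the mean value property of the harmonic function $u'$ (with $|B_R|/|\partial B_R|=R/n$) to pass from the boundary mean to the volume mean. No gaps; nothing further is needed.
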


\section{Main results}
The shape derivative $u'$, solution to problem \eqref{ELSD} will play a central role to prove the desired results.\\
Let us recall the definition of the $L^{\infty}$ and $L^p$ functionals. If $\Omega$ is a bounded and simply connected open subset of $\mathbb{R}^n$ and $u$ is the solution to problem \eqref{TPR}, we will denote the $L^{\infty}$-functional over $\Omega$ by
\begin{equation*}
	M(\Omega)= \| u\|_{L^{\infty}(\Omega)},
\end{equation*}
and, if $p \ge 1$, the $L^p$-functional
\begin{equation*}
	F_p (\Omega)= \|u\|_{L^p(\Omega)}^p.
\end{equation*}
\subsection{Shape derivative of the $L^{\infty}$-norm}
We prove the next result following the proof that can be found in [HLP].
\begin{thm} \label{SDM}
Let $B_R$ be a ball centered at the origin with radius $R>0$. Then for every $C^{2,\alpha}(\mathbb{R}^n, \mathbb{R}^n)$ vector field $V$, the shape derivative of $M$ at $B_R$ in any direction $V$ exists and it is given by
\begin{equation*}
M'(B_R, V) = u'(0),
\end{equation*}
where $0$ is the maximum point of \eqref{RTPR} and $u'$ is the solution to \eqref{ELSD}.
\end{thm}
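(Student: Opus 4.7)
The plan is to reduce the max on the perturbed domain $\Omega_t=B_{R,t}$ to a max on the fixed domain $B_R$ via the diffeomorphism $y=x+tV(x)$, apply the implicit function theorem to track the location of the maximum, and then observe that the spatial gradient term drops out because $u$ has its max at the origin.

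First I would pull the functional back: for $t$ small, $y \mapsto x+tV(x)$ is a $C^{2,\alpha}$ diffeomorphism of $B_R$ onto $B_{R,t}$, so
\begin{equation*}
M(B_{R,t})=\max_{y\in B_{R,t}} u(y,t)=\max_{x\in B_R}\tilde u(x,t),
\end{equation*}
with $\tilde u$ as in \eqref{deftildeu}. At $t=0$ the explicit formula \eqref{RTPR} together with \eqref{gradu} and \eqref{hessu} shows that $\tilde u(\cdot,0)=u$ has a unique interior maximum at $x=0$, with $\nabla u(0)=0$ and $\mathrm{Hess}_u(0)=-I/n$ strictly negative definite.

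Next I would apply the implicit function theorem to the map $(x,t)\mapsto \nabla_x\tilde u(x,t)$ near $(0,0)$. Since $\tilde u$ is jointly $C^{2,\alpha}$ in $(x,t)$ (the vector field $V$ has this regularity, and $u$ is smooth inside $B_R$), and since the Jacobian in $x$ at $(0,0)$ equals $\mathrm{Hess}_u(0)=-I/n$, which is invertible, there exists a $C^1$ curve $t\mapsto x(t)$ with $x(0)=0$ and $\nabla_x\tilde u(x(t),t)=0$. By continuity of the Hessian, $\mathrm{Hess}_x\tilde u(x(t),t)$ stays negative definite for small $t$, so $x(t)$ is the interior maximum point of $\tilde u(\cdot,t)$; any other potential maximizer would have to lie in a compact set away from $0$, where $u(\cdot,0)$ is strictly less than $u(0)$, and a standard uniform convergence argument rules this out for $t$ small.

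Therefore $M(B_{R,t})=\tilde u(x(t),t)$, and differentiating at $t=0$ gives
\begin{equation*}
M'(B_R,V)=\nabla_x\tilde u(0,0)\cdot x'(0)+\frac{\partial\tilde u}{\partial t}(0,0)=0+u'(0)=u'(0),
\end{equation*}
where the first term vanishes because $\nabla u(0)=0$, and the second is the shape derivative $u'(0)$ by definition. The only genuinely delicate step is the implicit function argument together with the verification that the tracked critical point remains the global maximum on $B_R$; the strict concavity of $u$ at the origin (Hessian $-I/n$) is exactly what makes this routine.
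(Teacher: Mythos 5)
Your argument is correct and rests on the same key mechanism as the paper's proof: the nondegeneracy of the maximum ($\mathrm{Hess}_u(0)=-I/n$) lets the implicit function theorem track a differentiable curve of critical points which remain maxima, and the spatial-gradient contribution drops out because $\nabla u(0,0)=0$. The bookkeeping, however, is genuinely different: you pull everything back to the fixed ball and differentiate $t\mapsto\tilde u(x(t),t)$ by the chain rule, whereas the paper stays on the moving domain, produces $y_t$ with $\nabla u(y_t,t)=0$, and splits the difference quotient as $\bigl[u(y_t,t)-u(0,t)\bigr]/t+\bigl[u(0,t)-u(0,0)\bigr]/t$, disposing of the first piece via Lagrange's theorem together with the mean value property of the (harmonic) components of $\nabla u(\cdot,t)$. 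Your route avoids that last step and is arguably cleaner, but it uses implicitly that $\tilde u$ is $C^1$ jointly in $(x,t)$, i.e.\ that $t\mapsto\tilde u(\cdot,t)$ is differentiable with values in $C^1(\overline{B_R})$, since you apply the implicit function theorem to $\nabla_x\tilde u$; this is the same standard differentiability fact (from [HP]) that the paper also invokes without proof, and you should state it explicitly. Two small cautions: first, $\partial_t\tilde u(0,0)$ is, in the usual terminology, the material derivative $\dot{\tilde u}(0)=u'+V\cdot\nabla u$ evaluated at $x=0$, not the shape derivative ``by definition''; it equals $u'(0)$ only because $\nabla u(0,0)=0$ (the paper's own computation of $\frac{d}{dt}\left[u(0,t)\right]_{t=0}$ performs exactly this cancellation). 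Second, when you argue that the tracked critical point is the global maximum for small $t$, include the boundary: near $\partial B_R$ the values of $\tilde u(\cdot,t)$ stay close to $u_{\min}<u_{\max}$, so no boundary maximum can occur; with these points made explicit, your proof is complete.
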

\begin{proof} 
Let us perturb the ball $B_R$ as we have seen in the previous section
\begin{equation*}
	B_{R,t} = \{ y = x+tV(x): x \in B_R,\,\, t \,\, \text{small enough}\},
\end{equation*}
and consider problem \eqref{ELR}, with $B_{R,t}$ in place of $\Omega_t$.\\
%Let $u(x,t)$ be the solution to the perturbed problem on the ball $B_{R,t}$, for $t$ small enough. Let $\psi: \mathbb{R}^n\times \mathbb{R} \longrightarrow \mathbb{R}^n$ defined as
%\begin{equation*}
%\psi(x,t) := \nabla u(x,t).
%\end{equation*}
Being $x=0$ the unique maximum point of \eqref{RTPR}, then $\nabla u(0,0)=0$. On the other hand, by the strict concavity of the torsion function $u$ on $B_R$, the matrix
\begin{equation*}
D_y\nabla u (0,0) = \mathrm{Hess}_u(0)
\end{equation*}
is invertible, since $\mathrm{Hess}_u$ is negative definite. Hence by the implicit function theorem, in a neighbourhood of the origin and for $t$ small enough, there exists a unique $y_t$ such that $\nabla u(y_t,t)=0$. Moreover the function $t\longrightarrow y_t$ is differentiable and $y_t$ must be a maximum, so $M(B_{R,t})=u(y_t,t)$.\\
We want to prove that
\begin{equation*}
\lim_{t\to 0 } \frac{M(B_{R,t})-M(B_R)}{t} = u'(0),
\end{equation*}
where
\begin{equation*}
\begin{split}
\frac{M(B_{R,t})-M(B_R)}{t} &= \frac{u(y_t,t)-u(0,0)}{t} \\
&= \frac{u(y_t,t)-u(0,t)}{t}+\frac{u(0,t)-u(0,0)}{t}.
\end{split}
\end{equation*}
By the differentiability of the map $t\longrightarrow u(\cdot,t)$ and the the fact that $\nabla u(0,0)=0$, we have
\begin{equation*}
\lim_{t\to 0 } \frac{u(0,t)-u(0,0)}{t} = \frac{d}{dt}\left[ u(0,t)\right]_{t=0} = u'(0)+ V(0)\cdot \nabla u(0,0) = u'(0).
\end{equation*}
%If we chose $r>0$ such that $B_r(x_0) \subset B_{R,t}$ for every $t<<1$, applying the mean value theorem to the harmonic function $u_t - u$ and $u'$
%\begin{equation}
%\begin{split}
%\left| \frac{u_t(x_0)-u(x_0)}{t}-u'(x_0)\right| &= \left| \frac{1}{|B_r(x_0)|}\int_{B_r(x_0)} %\left(\frac{u_t(x)-u(x)}{t}-u'(x)\right) \,dx \right| \\
%&\le C \left\| \frac{u_t -u}{t}- u' \right\|_{L^2(B_r(x_0))} \longrightarrow 0.
%\end{split}
%\end{equation}
\\
Furthermore, by the differentiability of $t\longrightarrow y_t$, by Lagrange theorem on the segment $[0, y_t]$, the mean value property of $\nabla u(\cdot,t)$ and the regularity of $u(\cdot,t)$, we get
\begin{equation*}
\begin{split}
\frac{u(y_t,t)-u(0,t)}{t} &= \nabla u (\xi_t,t) \frac{y_t}{t} \\& 
 =\left(\frac{1}{|B_r(\xi_t)|} \int_{B_r(\xi_t)} \nabla u(y,t)\,dy \right) \frac{y_t}{t},
\end{split}
\end{equation*}
with $\xi_t$ a suitable point in $[0,y_t]$. Hence
\begin{equation*}
	\lim_{t\to 0 } \frac{u(y_t,t)-u(0,t)}{t} = \nabla u(0,0) \left[ \frac{d y_t}{dt}\right]_{t=0}  = 0.
\end{equation*}
This conclude the proof.
\end{proof}

\begin{cor} \label{SBI}
The ball is a critical shape for the functional $M$, for every $V \in C^{2,\alpha}(\mathbb{R}^n,\mathbb{R}^n)$ which is volume preserving of the first order, i.e.
\begin{equation*}
M'(B_R, V) = u'(0)=0.
\end{equation*}
\end{cor}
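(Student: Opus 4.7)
The plan is to combine Theorem \ref{SDM} with the zero-mean property of $u'$ established in Corollary \ref{ZMF}. By Theorem \ref{SDM}, the shape derivative of $M$ at $B_R$ in the direction $V$ equals the pointwise value $u'(0)$, where $0$ is the unique maximum point of the radial torsion function. Hence the statement reduces to showing $u'(0)=0$ whenever $V$ is volume preserving of the first order.

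First I would recall that $u'$ solves problem \eqref{ELSD}, so in particular $\Delta u' = 0$ in $B_R$. Because $u'$ is harmonic and the domain is a ball centered at the origin, the classical mean value property applies at $x=0$, giving
\begin{equation*}
u'(0) \;=\; \frac{1}{|B_R|}\int_{B_R} u'\,dx.
\end{equation*}

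Next I would invoke Corollary \ref{ZMF}, which asserts precisely that $\int_{B_R} u'\,dx = 0$ under the volume-preserving hypothesis on $V$. Combining these two identities yields $u'(0)=0$, and then Theorem \ref{SDM} gives $M'(B_R,V)=u'(0)=0$, as claimed.

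There is essentially no obstacle left at this stage: the substantive work has already been done in the preceding results. The delicate steps, namely that the maximum of $u(\cdot,t)$ is attained at a unique interior point $y_t$ depending smoothly on $t$ (via the implicit function theorem and strict concavity of $u$ in $B_R$), and that the Robin boundary datum for $u'$ integrates to zero thanks to $\int_{\partial B_R} V\cdot\nu\,d\sigma = 0$, are both already packaged into Theorem \ref{SDM} and Corollary \ref{ZMF}. The present corollary is therefore a one-line consequence of the mean value property applied to the harmonic function $u'$.
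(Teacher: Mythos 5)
Your proposal is correct and follows essentially the same route as the paper: combine Theorem \ref{SDM} with Corollary \ref{ZMF} and use the mean value property of the harmonic function $u'$ at the center to conclude $u'(0)=0$. The paper writes this via the spherical and solid averages of $u'$ over $B_R$, which is exactly the identity you invoke.
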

\begin{proof} As a consequence of Lemma \ref{ZMF} and Theorem \ref{SDM}, applying the mean value theorem, we have that
\begin{equation*}
M'(B_R,V)=u'(0)= \frac{1}{n\omega_n R^{n-1}}\int_{\partial B_R} u'(x)\,d\sigma =\frac{1}{\omega_n R^n}\int_{B_R} u'\,dx= 0.
\end{equation*}
\end{proof}

\subsection{Shape derivative of the $L^p$-norm}
Next theorem will be a straightforward computation of the shape derivative of the functional $F_p(\Omega)$.
%Let us consider the solution to \eqref{TPR}. For every $p\in [1,+\infty)$ we consider the following functional
%\begin{equation*}
%F_p(\Omega) = \int_{\Omega} |u(x)|^p \,dx = \int_{\Omega} u^p(x)\,dx = \|u\|_{L^p(\Omega)}^p.
%\end{equation*}
%As in \eqref{SDFM} we define the \textit{shape derivative} of $F_p(\Omega)$ in the direction of the vector field $v\in C^{2,\alpha}(\mathbb{R}^n,\mathbb{R}^n)$, denoted by $F_p'(\Omega,v)$, the following 
%\begin{equation}\label{SDFF}
%F_p'(\Omega,v)= \lim_{t\to 0 } \frac{F_p(\Omega_t)-F_p(\Omega)}{t},
%\end{equation}
%Where $\Omega_t$ is the perturbation of $\Omega$.
\begin{thm}\label{SDFP} For every $C^{2,\alpha}(\mathbb{R}^n, \mathbb{R}^n)$ vector field $V$, the shape derivative of $F_p$ at $\Omega$ in any direction $V$ exists and it is given by
\begin{equation*}
F'_p(\Omega, V) = p\int_{\Omega} u^{p-1}u'\,dx +\int_{\partial \Omega} u^p (V\cdot \nu)\,d\sigma ,
\end{equation*}
where $u'$ is the shape derivative of $u$, solution to problem \eqref{TPR}, and $\nu$ is the outer unit normal to $\partial\Omega$.
%Moreover if $v$ is volume preserving of the first order 
%\begin{equation*}
%F'_p(\Omega, v) = \int_{\partial \Omega} u^p (v\cdot \nu)\,d\sigma
%\end{equation*}

\end{thm}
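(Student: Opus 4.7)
The plan is to pull the integral back to the fixed domain $\Omega$ via the change of variables $y = x + tV(x)$, differentiate under the integral sign at $t=0$ using the coefficients collected in Lemma \ref{EXPANSIONS}, and then reorganize the bulk term using the divergence theorem to produce the boundary contribution.

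Concretely, I would first write
\begin{equation*}
F_p(\Omega_t) = \int_{\Omega_t} u^p(y,t)\,dy = \int_{\Omega} \tilde{u}(t)^p\, J(t)\,dx,
\end{equation*}
where $\tilde{u}(t) = u(x + tV(x), t)$ and $J(t)$ is the Jacobian of the perturbation. Since $V \in C^{2,\alpha}$ and $u \in C^{2,\alpha}(\overline{\Omega})$, the integrand is smooth enough in $t$ (uniformly in $x$) to differentiate under the integral sign, yielding
\begin{equation*}
F'_p(\Omega,V) = \int_{\Omega} \Bigl[ p\,\tilde{u}(0)^{p-1}\,\dot{\tilde{u}}(0)\,J(0) + \tilde{u}(0)^{p}\,\dot{J}(0) \Bigr]\,dx.
\end{equation*}

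Next I plug in the values $\tilde{u}(0) = u$, $\dot{\tilde{u}}(0) = u' + V \cdot \nabla u$, $J(0) = 1$, $\dot{J}(0) = \mathrm{div}\,V$ from Lemma \ref{EXPANSIONS}, obtaining
\begin{equation*}
F'_p(\Omega,V) = p\int_{\Omega} u^{p-1} u'\,dx + \int_{\Omega} \Bigl[ p\,u^{p-1}\, V\cdot \nabla u + u^p\, \mathrm{div}\,V \Bigr]\,dx.
\end{equation*}
The bracketed expression is exactly $\mathrm{div}(u^p V)$, so the divergence theorem converts the second integral into $\int_{\partial\Omega} u^p (V\cdot \nu)\,d\sigma$, which gives the claimed formula.

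The only delicate point is justifying the differentiation under the integral sign: I would fix a neighborhood of $0$ in $t$ on which the map $t \mapsto \tilde{u}(t)$ is $C^1$ into, say, $C^1(\overline{\Omega})$ (a standard consequence of the implicit function theorem applied to the nonlinear system \eqref{ELRT}, using $C^{2,\alpha}$ regularity of $V$ and elliptic regularity for the torsion problem), so that $\tilde{u}(t)^{p} J(t)$ is $C^1$ in $t$ with derivative uniformly bounded in $x$. Everything else is routine. Since $\Omega$ need not be a ball here, the boundary integral does not vanish a priori, and this is why the statement keeps both a bulk term in $u'$ and a boundary term in $V\cdot \nu$.
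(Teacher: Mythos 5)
Your proposal is correct and follows essentially the same route as the paper: pull back to $\Omega$ via $y=x+tV(x)$, differentiate under the integral at $t=0$ using Lemma \ref{EXPANSIONS}, and use the divergence theorem to turn the terms $p\,u^{p-1}V\cdot\nabla u + u^p\,\mathrm{div}\,V = \mathrm{div}(u^pV)$ into the boundary integral (the paper performs the same cancellation, just written as integration by parts on $\int_\Omega u^p\,\mathrm{div}\,V\,dx$). Your extra remark on justifying the differentiation under the integral sign is a reasonable elaboration of a step the paper takes for granted.
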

\begin{proof} Let $u$ be the solution to the perturbed problem
\begin{equation*}
\begin{cases}
-\Delta_y u(y,t) = 1 & \mbox{in}\ \Omega_t\vspace{0.2cm}\\
\dfrac{\de u}{\de \nu_t}(y,t)+\beta u(y,t) =0&\mbox{on}\ \partial \Omega_t\vspace{0.2cm},
\end{cases}
\end{equation*}
where $\Omega_t$ is the perturbed domain defined in \eqref{PERT}. Then
\begin{equation*}
F_p(\Omega_t) = \int_{\Omega_t} u^p(y,t)\,dy = \int_{\Omega} \tilde{u}^p(t) J(t)\,dx,
\end{equation*}
with $\tilde{u}(t) = \tilde{u}(x+tV(x),t)$ and $J(t)$ the Jacobian determinant as in \eqref{JD}.

Then it is possible to differentiate under the sign of integral and
\begin{equation*}
\begin{split}
\frac{d}{dt} F_p(\Omega_t)&= \frac{d}{dt} \int_{\Omega} \tilde{u}^p(t) J(t)\,dx = \int_{\Omega}\frac{d}{dt}[ \tilde{u}^p(t) J(t)]\,dx\\
&= p \int_{\Omega} \tilde{u}^{p-1}(t)\dot{\tilde{u}}(t) J(t)\,dx + \int_{\Omega} \tilde{u}^p(t)\dot{J}(t) \,dx.
\end{split}
\end{equation*}
Evaluating this derivative for $t=0$
\begin{equation*}
\left[ \frac{d}{dt} F_p(\Omega_t)\right]_{t=0} = p \int_{\Omega} \tilde{u}^{p-1}(0)\dot{\tilde{u}}(0) J(0)\,dx \\
+ \int_{\Omega} \tilde{u}^p(0)\dot{J}(0) \,dx.
\end{equation*}
Applying Lemma \eqref{EXPANSIONS}, we get
%Remembering that $\tilde{u}(0)=u(x)$, $\left[\frac{d}{dt}\tilde{u}(t)\right]_{t=0}=u'+V\cdot \nabla u$, $J(0)=1$ and $\dot{J}(0)=\mathrm{div}\,V$, then
\begin{equation*}
\begin{split}
F'_p(\Omega,V) &= p\int_{\Omega} u^{p-1}u'\,dx +p\int_{\Omega} u^{p-1} V\cdot \nabla u \,dx + \int_{\Omega} u^p \mathrm{div}\,V \,dx \\
&=p\int_{\Omega} u^{p-1}u'\,dx +p\int_{\Omega} u^{p-1} V\cdot \nabla u \,dx+ \int_{\partial\Omega} u^p (V\cdot \nu)\,d\sigma \\
&- p\int_{\Omega} u^{p-1} V\cdot \nabla u \,dx = p\int_{\Omega} u^{p-1}u'\,dx + \int_{\partial\Omega} u^p (V\cdot \nu)\,d\sigma.
\end{split}
\end{equation*}
\end{proof}

When $\Omega = B_R$, we can use the symmetry properties of \eqref{RTPR} and the property of $u'$ to be a zero mean function, to prove that
\begin{cor}
The ball $B_R$ centered at the origin with radius $R>0$ is a critical shape for the functional $F_p$, for every $1 \le p < +\infty$ and every vector field $V \in C^{2,\alpha}(\mathbb{R}^n,\mathbb{R}^n)$ which is volume preserving of the first order, i.e.
\begin{equation*}
F'_p(B_R, V) =0.
\end{equation*}
\end{cor}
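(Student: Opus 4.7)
The plan is to apply the shape derivative formula from Theorem \ref{SDFP} with $\Omega = B_R$ and show that both terms on the right-hand side vanish under the volume-preserving assumption. Concretely, I would write
\begin{equation*}
F'_p(B_R,V) = p\int_{B_R} u^{p-1} u'\,dx + \int_{\partial B_R} u^p (V\cdot\nu)\,d\sigma
\end{equation*}
and handle the boundary term and the bulk term separately, exploiting in both cases the radial symmetry of $u$ given by \eqref{RTPR}.

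For the boundary term, the explicit formula \eqref{RTPR} shows that $u$ is radial, so $u \equiv u_{\min} = R/(\beta n)$ on $\partial B_R$. Hence $u^p$ factors out of the surface integral, leaving
\begin{equation*}
\int_{\partial B_R} u^p (V\cdot\nu)\,d\sigma = \left(\frac{R}{\beta n}\right)^{p} \int_{\partial B_R} V\cdot\nu\,d\sigma,
\end{equation*}
which vanishes by the volume-preserving condition \eqref{VPFO} already observed in the proof of Corollary \ref{ZMF}.

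For the bulk term, the idea is to combine the radial symmetry of $u$ with the harmonicity of $u'$ and the fact that $u'(0)=0$, which follows from Corollary \ref{ZMF} (zero mean plus the mean value property, as used in Corollary \ref{SBI}). Writing $u^{p-1}(x) = g(|x|)$ and applying the coarea formula,
\begin{equation*}
\int_{B_R} u^{p-1} u'\,dx = \int_0^R g(r)\left(\int_{\partial B_r} u'\,d\sigma\right)dr.
\end{equation*}
Since $u'$ is harmonic in $B_R$, the mean value property gives $\int_{\partial B_r} u'\,d\sigma = n\omega_n r^{n-1} u'(0)$ for every $r\in (0,R]$, and by Corollary \ref{SBI} this value is zero. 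Hence the inner integral vanishes identically in $r$, and the whole bulk term is zero.

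No serious obstacle is anticipated: the argument is essentially a direct application of Theorem \ref{SDFP} and Corollary \ref{ZMF}, with radiality of $u$ on the ball doing the separation of variables. The only point to be careful about is invoking the mean value property on every concentric sphere $\partial B_r$ (not merely on $\partial B_R$), which is legitimate because $u'$ is harmonic on the whole ball.
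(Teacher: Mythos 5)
Your proposal is correct and follows essentially the same route as the paper: apply Theorem \ref{SDFP} at $\Omega=B_R$, kill the boundary term by the constancy of $u$ on $\partial B_R$ together with the volume-preserving condition, and kill the bulk term via the coarea formula plus the mean value property of the harmonic function $u'$ on every sphere $\partial B_r$, using $u'(0)=0$ from Corollary \ref{SBI}. No gaps to report.
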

\begin{proof} 
By previous theorem we know that
\begin{equation*}
F'_p(B_R,V) = p\int_{B_R} u^{p-1} u'\,dx + \int_{\partial B_R} u^p (V\cdot \nu)\,d\sigma.
\end{equation*}
Being $u$ constant on the boundary and $V$ a vector field volume preserving of the first order
\begin{equation*}
\int_{\partial B_R} u^p (V\cdot \nu)\,d\sigma = u_{\min}^p \int_{\partial B_R}(V\cdot \nu)\,d\sigma = u_{\min}^p\int_{B_R} \mathrm{div}\,V =0,
\end{equation*}
where $u_{\min} = u(R) = \frac{R}{\beta n}$.
By corollary \ref{SBI}, we know that $u'(0)=0$ and so by the mean value theorem, we have that
\begin{equation*}
	\int_{\partial B_r} u'(x)\,d\mathcal{H}^{n-1} = u'(0)n\omega_n r^{n-1} = 0,
\end{equation*}
for every $r \in [0;R]$. Eventually, applying the Coarea Formula
\begin{equation*}
\begin{split}
	&p\int_{B_R} u^{p-1}u'\,dx = p\int_{B_R} \left(\frac{R}{\beta n} - \frac{1}{2n}(R^2-|x|^2)\right)^{p-1} u' \,dx \\
	=& \, p\int_0^R \left(\frac{R}{\beta n} - \frac{1}{2n}(R^2-r^2)\right)^{p-1} \int_{\partial B_r} u'(x)\,d\sigma\,dr = 0.
\end{split}
\end{equation*}
Hence
\begin{equation*}
F'_p(B_R,V)= 0.
\end{equation*}
\end{proof}

\small{

}

\begin{thebibliography}{999}
		
		% \bibitem[BNT]{BNT} B. Brandolini, C. Nitsch, C. Trombetti, \textit{An upper bound for nonlinear eigenvalues on convex domains by means of the isoperimetric deficit}, Arch. Math. (Basel) {\bf 94.4} (2010): 391-400.
		
	    
		
		\bibitem[ANT]{ANT} A. Alvino, C. Nitsch, C. Trombetti, \textit{A Talenti comparison result for solutions to elliptic problems with robin boundary conditions}, ArXiv (2019): 1-15.
		
		\bibitem[BG]{BG} D. Bucur, A. Giacomini, \textit{ Faber-Krahn inequalities for the Robin-Laplacian: A free discontinuity approach}, Arch. Rational Mech. Anal. 218, no. 2, (2015), 757-824.
		
		\bibitem[BW]{BW} C. Bandle, A. Wagner, \textit{Second domain variation for problems with Robin boundary conditions}, ArXiv (2015): 1-42.
		
		\bibitem[HLP]{HLP} A. Henrot, L. Lucardesi, G. Philippin, \textit{On two functionals involving the maximum of the torsion function}, ArXiv (2017): 1-15.
		
		\bibitem[HP]{HP} A. Henrot, M. Pierre, \textit{Variation et optimisation de formes},  Mathématique \& Applications, 48, Springer, (2005), 167-178.
		
		\bibitem[K]{K} S. Kesavan, \textit{Symmetrization \& applications}, Series in Analysis, 3, World Scientific Publishing
		Co. Pte. Ltd., (2006).
		
		%\bibitem[KL]{KL} N. J. Korevaar, J. L. Lewis, \textit{Convex solutions of certain elliptic equations have costant rank Hessian},  Arch. Rational Mech. Anal.  (1987): 19-32
		
		\bibitem[L]{L} G. G. Lorentz, \textit{Some new functional spaces}, Ann. of Math. (2) 51, (1950), 37-55.
		
		%\bibitem[KM]{KM} G. Keady, A. McNabb, \textit{Functions with constant Laplacian satisfying homogeneous Robin boundary conditions}, IMA journal of Applied Mathemathics. {\bf 50}, (1993): 205-224
		
		\bibitem[P]{P} L.E. Payne, \textit{Isoperimetric inequalities and their applications},  SIAM Rev., 9, (1967), 453-488.

		\bibitem[PPT]{PPT} G. Paoli, G. Piscitelli, L. Trani, \textit{Sharp estimates for the first p-Laplacian eigenvalue and for the p-torsional rigidity on convex sets with holes},  ESAIM: COCV, (in press).
		
		\bibitem[PS]{PS} G. P\'{o}lya, G. Szeg\"{o}, \textit{Isoperimetric Inequalities in Mathematical Physics},  Annals of Mathematics Studies,
		no. 27, Princeton University Press, (1951).
		
		\bibitem[T]{T} G. Talenti, \textit{Elliptic equations and rearrangements}, Ann. Scuola Norm. Sup. Pisa Cl. Sci. (4)
		3, no. 4, (1976), 697-718.
		
		
		
		
		
		
	\end{thebibliography}
\end{document}